\renewcommand{\mod}{\; \mathrm{mod} \;}
\newcommand{\set}[1]{\left\{ #1 \right\}}
\newcommand{\abs}[1]{\left| #1 \right|}
\newcommand{\ol}[1]{\overline{#1}}
\DeclareMathOperator{\Dist}{Dist}
\DeclareMathOperator{\End}{End}
\DeclareMathOperator{\gr}{gr}
\DeclareMathOperator{\opH}{H}
\DeclareMathOperator{\ind}{ind}
\DeclareMathOperator{\Lie}{Lie}
\DeclareMathOperator{\soc}{soc}
\DeclareMathOperator{\St}{St}
\newcommand{\C}{\mathbb{C}}
\newcommand{\F}{\mathbb{F}}
\newcommand{\G}{\mathbb{G}}
\newcommand{\N}{\mathbb{N}}
\newcommand{\Z}{\mathbb{Z}}
\newcommand{\g}{\mathfrak{g}}
\newcommand{\fu}{\mathfrak{u}}
\newcommand{\gc}{\g_\C}
\newcommand{\Ugc}{U(\gc)}
\newcommand{\Uzg}{U_\Z(\g)}
\newcommand{\Fp}{\F_p}
\newcommand{\Fq}{\F_q}
\newcommand{\gfp}{\g_{\Fp}}
\newcommand{\gfq}{\g_{\Fq}}
\newcommand{\fufp}{\fu_{\Fp}}
\newcommand{\fufq}{\fu_{\Fq}}
\newcommand{\ofp}{\otimes_{\Fp}}
\newcommand{\Gfp}{G(\Fp)}
\newcommand{\Gfq}{G(\Fq)}
\newcommand{\Bfq}{B(\Fq)}
\newcommand{\Ufp}{U(\Fp)}
\newcommand{\Ufq}{U(\Fq)}
\numberwithin{equation}{section}
\newtheorem{theorem}{Theorem}[section]
\newtheorem{corollary}[theorem]{Corollary}
\newtheorem{lemma}[theorem]{Lemma}
\newtheorem*{theorem*}{Theorem}
\newtheorem*{lemma*}{Lemma}
\theoremstyle{definition}
\newtheorem{example}[theorem]{Example}
\title[On projective modules]{On projective modules for Frobenius kernels and finite Chevalley groups}
\thanks{The author was supported in part by NSF VIGRE grant DMS-0738586.}
\author{Christopher M.\ Drupieski}
\date{\today}
\address{
Department of Mathematics \\
DePaul University \\
Chicago, IL 60614}
\email{cdrupies@depaul.edu}
\subjclass[2010]{Primary 20G10, 20C33; Secondary 20G05, 17B56.}
\begin{document}

\begin{abstract}
Let $G$ be a simply-connected semisimple algebraic group scheme over an algebraically closed field of characteristic $p > 0$. Let $r \geq 1$ and set $q = p^r$. We show that if a rational $G$-module $M$ is projective over the $r$-th Frobenius kernel $G_r$ of $G$, then it is also projective when considered as a module for the finite subgroup $\Gfq$ of $\Fq$-rational points in $G$. This salvages a theorem of Lin and Nakano (\emph{Bull.\ London Math.\ Soc.} 39 (2007) 1019--1028). We also show that the corresponding statement need not hold when the group $G$ is replaced by the unipotent radical $U$ of a Borel subgroup $B$ of $G$.
\end{abstract}

\maketitle

\section{Introduction} \label{section:introduction}

\subsection{} \label{subsection:introduction}

Let $G$ be a simply-connected semisimple algebraic group scheme over an algebraically closed field $k$ of characteristic $p > 0$. Assume that $G$ is defined and split over $\Fp$, and let $F: G \rightarrow G$ be the standard Frobenius morphism defining the $\Fp$-structure on $G$. Given an integer $r \geq 1$, let $G_r = \ker(F^r)$ be the $r$-th infinitesimal Frobenius kernel of $G$, and let $\Gfq = G^{F^r}$ be the finite subgroup of $\Fq$-rational points in $G$, consisting of the fixed points in $G$ under the $r$-th iterate of $F$. Here $q = p^r$. Let $\g = \Lie(G)$ be the Lie algebra of $G$, and let $u(\g)$ be the restricted enveloping algebra of $\g$.

In the proceedings of the 1986 Arcata Conference on Representations of Finite Groups, Brian Parshall asked whether a finite-dimensional rational $G$-module that is projective for $G_1$ (equivalently, for $u(\g)$) is always projective for $\Gfp$ \cite[5.3]{Parshall:1987}. Lin and Nakano provided an affirmative answer to this question in 1999 by showing that if $M$ is a rational $G$-module, then the complexity $c_{\Gfp}(M)$ of $M$ as a $k\Gfp$-module is at most one-half the complexity $c_{G_1}(M)$ of $M$ as a $G_1$-module \cite[Theorem 3.4]{Lin:1999}. Since a module is projective if and only if its complexity is zero, this observation answered Parshall's question. The complexities $c_{\Gfp}(M)$ and $c_{G_1}(M)$ can also be interpreted as the dimensions of the associated cohomological support varieties $\abs{\Gfp}_M$ and $\abs{G_1}_M$. Thus, the Lin--Nakano approach possesses a certain geometric flavor, and subsequent work by Carlson, Lin, and Nakano \cite{Carlson:2008} and by Friedlander \cite{Friedlander:2010,Friedlander:2011} has sought to better understand the relationship between the varieties $\abs{\Gfp}_M$ and $\abs{G_1}_M$.

In this note we provide an affirmative answer to Parshall's question for all $r \geq 1$. Specifically, given a finite-dimensional rational $G$-module $M$, we show that if $M$ is projective for the $r$-th Frobenius kernel $G_r$ of $G$, then $M$ is projective as a $k\Gfq$-module where $q = p^r$.\footnote{This statement is referred to in \cite{Lin:2007} as the \emph{Generalized Parshall Conjecture}.} This generalization was previously claimed by Lin and Nakano in 2007 \cite{Lin:2007}, though their argument was incomplete because of an error in the proof of their key proposition; for a more detailed explanation see Section \ref{subsection:exampleU}. The argument we present here is entirely non-geometric in nature, that is, it does not require the use or discussion of support varieties or complexity, but relies instead only on the algebra of distributions on $G$, so is interesting even for the previously-established case when $r=1$. For $r=1$ our argument also eliminates certain assumptions on the prime $p$ that were necessary for the methods of \cite{Lin:1999}. After proving the main theorem, we provide in Section \ref{subsection:exampleU} an example to show that the corresponding statement need not hold when $G$ is replaced by the unipotent radical $U$ of a Borel subgroup $B$ of $G$. Finally, in Section \ref{section:Friedlander} we discuss some recent results of Friedlander \cite{Friedlander:2010} that are related to the projectivity of modules over $\Gfq$.

\subsection{Notation}

Let $G$, $F$, $G_r$, $\Gfq$, and $\g$ be as defined in Section \ref{subsection:introduction}. Let $T \subset G$ be a maximal torus defined and split over $\Fp$, and let $\Phi$ be the set of roots of $T$ in $G$. Let $B \subset G$ be a Borel subgroup containing $T$ and corresponding to the set $\Phi^-$ of negative roots in $\Phi$, and let $B^+ \subset G$ be the opposite Borel subgroup corresponding to the set $\Phi^+$ of positive roots in $\Phi$. Let $X(T)$ be the integral weight lattice obtained from $T$. Then $X(T)$ is partially ordered by $\lambda \geq \mu$ if and only if $\lambda - \mu$ is a sum of positive roots. Let $U \subset B$ be the unipotent radical of $B$. Set $\N = \set{0,1,2,\ldots}$. Additional notation will be introduced as needed.

\section{The Generalized Parshall Conjecture}

\subsection{The algebra of distributions}

We begin by recalling certain basic facts concerning the algebra of distributions on $G$; for further details, see \cite[II.1.12, II.1.19, II.3.3]{Jantzen:2003}. Let $\gc$ be the complex semisimple Lie algebra of the same Lie type as $G$, with Chevalley basis $\{X_\alpha,H_i: \alpha \in \Phi, i \in [1,n] \}$. Let $\Ugc$ be the universal enveloping algebra of $\gc$, and let $\Uzg \subset \Ugc$ be the Kostant $\Z$-form of $\Ugc$. Since $G$ is semisimple and simply-connected, the algebra $\Dist(G)$ of distributions on $G$ with support at the identity, also known as the hyperalgebra of $G$, may be realized as $\Uzg \otimes_\Z k$, the $k$-algebra obtained via scalar extension from $\Uzg$. Thus, $\Dist(G)$ admits a $k$-basis consisting of all monomials
\[
\prod_{\alpha \in \Phi^-} X_{\alpha,n(\alpha)} \prod_{i=1}^n H_{i,m(i)} \prod_{\alpha \in \Phi^+} X_{\alpha,n'(\alpha)},
\]
where $n(\alpha),m(i),n'(\alpha) \in \N$, $X_{\alpha,n} = X_\alpha^n/(n!)$, $H_{i,m} = \binom{H_i}{m}$, and the products are taken with respect to any fixed ordering of the roots. Similarly, $\Dist(U)$ admits a $k$-basis consisting of all monomials $\prod_{\alpha \in \Phi^-} X_{\alpha,n(\alpha)}$ with $n(\alpha) \in \N$. If the integers $n(\alpha)$ are restricted to lie in the range $0 \leq n(\alpha) < p^r$, then one obtains a $k$-basis for the algebra $\Dist(U_r)$. Given $\alpha \in \Phi$, the vectors $X_{\alpha,n}$ with $n \in \N$ form a $k$-basis for the algebra $\Dist(U_\alpha)$ of distributions on the one-dimensional root subgroup $U_\alpha$, and the vectors $X_{\alpha,n}$ with $0 \leq n < p^r$ form a $k$-basis for $\Dist(U_{\alpha,r})$, the algebra of distributions on the $r$-th Frobenius kernel $U_{\alpha,r}$ of $U_\alpha$.

Each rational $U_\alpha$-module $M$ naturally admits the structure of a locally finite $\Dist(U_\alpha)$-module. Moreover, the action of the $X_{\alpha,n}$ on $M$ determines the action of $U_\alpha$ on $M$. Indeed, let $x_\alpha: \G_a \rightarrow U_\alpha$ be a fixed isomorphism between the additive group $\G_a$ and the root subgroup $U_\alpha$. Then for $a \in \G_a$, the action of $x_\alpha(a) \in U_\alpha$ on $m \in M$ is related to the action of the $X_{\alpha,n}$ on $m$ by the equation
\[
x_\alpha(a).m = \sum_{n \geq 0} a^n (X_{\alpha,n}.m).
\]

\subsection{An equality of endomorphism spaces} \label{subsection:equivalentspan}

For each $1 \leq i < q$, define the formal infinite sum
\[
y_{\alpha,i} = \sum_{n \geq 0} X_{\alpha,i+n(q-1)},
\]
and set $y_{\alpha,0} = 1$. Then the $y_{\alpha,i}$ are well-defined operators on any rational $U_\alpha$-module.

\begin{lemma} \label{lemma:equivalentspan}
Let $M$ be a rational $U_\alpha$-module. Then the span in $\End_k(M)$ of the operators $y_{\alpha,0},y_{\alpha,1},\ldots,y_{\alpha,q-1}$ is the same as the $k$-span of the operators $\set{x_\alpha(a):a \in \Fq}$.
\end{lemma}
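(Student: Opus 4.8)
The plan is to exhibit both spans as the same $q$-dimensional subspace of $\End_k(M)$ by passing through a common intermediate object, namely the truncation that actually governs the $\Fq$-points. First I would record the key simplification: although $M$ is only locally finite over $\Dist(U_\alpha)$, on any finite-dimensional $U_\alpha$-submodule the operator $X_{\alpha,n}$ acts as zero for all sufficiently large $n$, so each formal sum $y_{\alpha,i} = \sum_{n\ge 0} X_{\alpha,i+n(q-1)}$ is a genuine, well-defined endomorphism, and likewise $x_\alpha(a).m = \sum_{n\ge 0} a^n (X_{\alpha,n}.m)$ is a finite sum. Thus everything in sight reduces, submodule by submodule, to a statement about the finite-dimensional algebra $\Dist(U_{\alpha,r})$, which has $k$-basis $X_{\alpha,0},\dots,X_{\alpha,q-1}$; I may as well prove the equality of spans inside $\End_k(M)$ for $M$ finite-dimensional, the general case following by taking a union over submodules.

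The heart of the argument is a change-of-basis computation in $\Dist(U_{\alpha,r})$ viewed as a $k$-algebra of functions on $\Fq$. For $a \in \Fq^\times$, write $x_\alpha(a).m = \sum_{n\ge 0} a^n (X_{\alpha,n}.m)$ and collect the exponents modulo $q-1$: since $a^{q-1}=1$ for $a\in\Fq^\times$, we get $a^{i+n(q-1)} = a^i$, so $x_\alpha(a)$ acts on $M$ exactly as $\sum_{i=0}^{q-1} a^i\, y_{\alpha,i}$ — being slightly careful about the constant term, where the $n=0$ contribution $X_{\alpha,0}=1$ must be separated out and the identification $y_{\alpha,0}=1$ used. (For $a=0$ one simply gets the identity, consistent with $i=0$.) This shows the $k$-span of $\set{x_\alpha(a):a\in\Fq}$ is contained in the span of $y_{\alpha,0},\dots,y_{\alpha,q-1}$. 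For the reverse inclusion I would invoke the nonvanishing of a Vandermonde determinant: the matrix $\bigl(a^i\bigr)_{a\in\Fq,\,0\le i\le q-1}$ has rank $q$ over $k$ because the $q$ elements of $\Fq$ are distinct and $q \le q$, so the linear system expressing the $y_{\alpha,i}$ in terms of the $x_\alpha(a)$ is solvable, giving the opposite containment and hence equality.

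The one genuine subtlety — and the step I expect to require the most care — is the bookkeeping around the index $i=0$ and the coincidence $q-1 \equiv -1 \pmod{q-1}$ being vacuous: the residues $i \in \{0,1,\dots,q-2\}$ are the honest congruence classes mod $q-1$, while $i=q-1$ is forced to equal the class of $0$, yet $y_{\alpha,0}$ is \emph{defined} to be $1$ rather than $\sum_{n\ge 0} X_{\alpha,n(q-1)}$, and $y_{\alpha,q-1} = \sum_{n\ge 0} X_{\alpha,(n+1)(q-1)}$ is a separate operator. One must check these two operators together span the same line-plus as the constant term of $x_\alpha(a)$ demands; concretely, $\sum_{n\ge 0} X_{\alpha,n(q-1)} = 1 + y_{\alpha,q-1}$, so replacing this combination by $\{1, y_{\alpha,q-1}\}$ (equivalently $\{y_{\alpha,0}, y_{\alpha,q-1}\}$) does not change the span, and the Vandermonde argument goes through on the $q$ exponents $0,1,\dots,q-1$ as written. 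Once this indexing is pinned down, both inclusions are immediate from the two linear-algebra facts above.
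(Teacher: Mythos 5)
Your argument is correct and follows essentially the same route as the paper: reduce to finite-dimensional $M$, truncate to finite sums, observe that $a^q = a$ forces $x_\alpha(a) = \sum_{i=0}^{q-1} a^i y_{\alpha,i}$ on $M$ for $a \in \Fq$ (with the same care about $a=0$ and the index $i=0$), and invert the resulting $q \times q$ Vandermonde matrix. One aside is inaccurate --- the operators do not reduce to elements of $\Dist(U_{\alpha,r})$, since divided powers $X_{\alpha,n}$ with $n \geq q$ can act nontrivially on a finite-dimensional rational $U_\alpha$-module --- but your actual computation uses the full, eventually terminating sums, exactly as the paper does with its cutoff $N \geq q$, so nothing is affected.
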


\begin{proof}
Since every rational $U_\alpha$-module is a sum of finite-dimensional modules, it suffices to assume that $M$ is finite-dimensional. Then there exists an integer $N \geq q$ such that for all $m \in M$, $X_{\alpha,n}.m = 0$ for all $n \geq N$. Then $x_\alpha(a)$ acts on $M$ via the finite sum $\ol{x_\alpha(a)}:= \sum_{n=0}^{N-1} a^n X_{\alpha,n} \in \Dist(U_\alpha)$. Similarly, $y_{\alpha,i}$ acts on $M$ via the finite sum
\[
\ol{y_{\alpha,i}}:= \sum_{\substack{0 \leq m < N \\ m \equiv i \mod q-1}} X_{\alpha,m},
\]
and the $\ol{y_{\alpha,i}}$ are linearly independent elements of $\Dist(U_\alpha)$.

Let $a \in \Fq$. Since $a^q = a$, we have $\ol{x_\alpha(a)} = \sum_{i=0}^{q-1} a^i \ol{y_{\alpha,i}}$, where by convention we set $0^0 = 1$. Writing $\Fq = \set{a_0,a_1,\ldots,a_{q-1}}$, the matrix for the linear transformation that sends $\ol{y_{\alpha,i}} \mapsto \ol{x_\alpha(a_i)}$ is an invertible Vandermonde matrix; cf.\ \cite[\S 3.3]{Lin:2007}. It follows that $\set{\ol{y_{\alpha,i}}: 0 \leq i < q}$ and $\{\ol{x_\alpha(a)}:a \in \Fq \}$ are each linearly independent sets spanning the same subspace of $\Dist(U_\alpha)$, and consequently that their images span the same subspace of $\End_k(M)$.
\end{proof}

\subsection{The Generalized Parshall Conjecture for Borel subgroups}

We now establish the Generalized Parshall Conjecture for the Borel subgroup $B$ of $G$.

\begin{theorem} \label{theorem:GPCforB}
Let $M$ be a finite-dimensional rational $B$-module. Suppose $M$ is projective as a $B_r$-module. Then $M$ is projective as a $k\Ufq$-module, and hence also as a $k\Bfq$-module.
\end{theorem}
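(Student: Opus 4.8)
The plan is to reduce projectivity over $k\Ufq$ to a statement about $\Dist(U_r)$-modules, using Lemma~\ref{lemma:equivalentspan} root-subgroup by root-subgroup to bridge the two. First I would recall the standard fact that a finite-dimensional $k\Ufq$-module is projective if and only if it is free, and that freeness can be detected by the rank of a "top-degree" operator: since $\Ufq$ is a $p$-group, $k\Ufq$ is local, and one has the classical criterion that $M$ is free over $k\Ufq$ precisely when $\dim_k(M/\operatorname{rad}(k\Ufq)\cdot M)\cdot |\Ufq| = \dim_k M$, or equivalently when a suitable product of augmentation-ideal elements acts with the expected rank. In parallel, the hypothesis that $M$ is projective over $B_r$ restricts (via $U_r \subset B_r$, using that $\Dist(B_r)$ is free over $\Dist(U_r)$) to the statement that $M$ is projective, hence free, over $\Dist(U_r)$.

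Next I would exploit the product (triangular-type) decompositions on both sides. On the infinitesimal side, $\Dist(U_r)$ has a $k$-basis of monomials $\prod_{\alpha\in\Phi^-} X_{\alpha,n(\alpha)}$ with $0\le n(\alpha)<q$, so freeness over $\Dist(U_r)$ is governed by the joint action of the operators $X_{\alpha,n}$ for $0\le n<q$ and $\alpha\in\Phi^-$. On the finite side, $\Ufq$ has, via a choice of ordering of $\Phi^-$, a factorization $\Ufq = \prod_{\alpha\in\Phi^-} U_\alpha(\Fq)$ as a product of sets (a "big-cell"/PBW-type factorization for the finite unipotent group), so $k\Ufq$ has a $k$-basis of products $\prod_{\alpha\in\Phi^-} x_\alpha(a_\alpha)$ with $a_\alpha\in\Fq$. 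By Lemma~\ref{lemma:equivalentspan} applied to each root subgroup $U_\alpha$ acting on $M$ (restricting the $B$-action), the span in $\End_k(M)$ of $\{x_\alpha(a):a\in\Fq\}$ coincides with the span of $\{y_{\alpha,0},\dots,y_{\alpha,q-1}\}$; and $y_{\alpha,i}$ reduces on the finite-dimensional module $M$ to $X_{\alpha,i}$ plus a combination of higher $X_{\alpha,m}$'s, i.e. the change-of-basis between $\{x_\alpha(a)\}_a$ and $\{X_{\alpha,n}: 0\le n<q\}$ is, after accounting for the action, unitriangular-plus-Vandermonde and hence invertible on $M$. Multiplying these identifications across all $\alpha\in\Phi^-$ shows that the image of $k\Ufq$ in $\End_k(M)$ equals the image of the $k$-span of the monomials $\prod X_{\alpha,n(\alpha)}$, $0\le n(\alpha)<q$, which is exactly the image of $\Dist(U_r)$ in $\End_k(M)$.

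With that identification in hand, the conclusion is nearly immediate: $M$ is a module for two algebras whose images in $\End_k(M)$ coincide, one of which ($\Dist(U_r)$) acts freely by hypothesis. I would then argue that freeness is an intrinsic property visible from the action — e.g. by comparing dimensions, $\dim_k M = q^{|\Phi^+|}\cdot \dim_k(M^{U_r})$ on the infinitesimal side, while $\dim_k k\Ufq = q^{|\Phi^+|}$ and $M^{\Ufq} = M^{U_r}$ since the fixed points are computed by the common image of the augmentation ideals — so $M$ has the dimension forcing freeness over $k\Ufq$ as well. Passing from $\Ufq$ to $\Bfq$ is then routine: $k\Bfq$ is free over $k\Ufq$ (as $\Ufq$ is a subgroup), so a $k\Bfq$-module that is projective on restriction to $\Ufq$ need not be projective in general — but here the $\Bfq$-action on $M$ extends the $\Ufq$-action and one invokes the standard fact that for the semidirect product $\Bfq = \Ufq \rtimes \Tfq$ with $|\Tfq|$ coprime to $p$, a $k\Bfq$-module is projective iff it is projective over the Sylow $p$-subgroup $\Ufq$.

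The main obstacle I anticipate is the second step: making precise that the two product factorizations are "compatible" — that the span of ordered products $\prod_\alpha x_\alpha(a_\alpha)$ in $\End_k(M)$ equals the span of ordered products $\prod_\alpha X_{\alpha,n(\alpha)}$. Lemma~\ref{lemma:equivalentspan} handles a single root subgroup, but the operators for different roots do not commute, so one cannot simply take tensor products of the one-variable statements; instead one must induct on the number of roots, at each stage using that the already-identified span is a \emph{subalgebra} (or at least absorbs multiplication by the next root subgroup's operators) and that $\{x_\alpha(a):a\in\Fq\}$ and $\{X_{\alpha,n}:0\le n<q\}$ have the same $k$-span even after multiplying on one side by a fixed operator. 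Keeping track of the ordering conventions, and verifying that the relevant spanning sets remain linearly independent of the right cardinality $q^{|\Phi^-|}$ on a free module, is where the real care is needed.
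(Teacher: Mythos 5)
Your overall frame (reduce to $\Ufq$ via Sylow, get freeness over $\Dist(U_r)$ from $B_r$-projectivity, compare the two algebras through Lemma \ref{lemma:equivalentspan}, finish with a dimension count) matches the paper's, but your central step --- that the image of $k\Ufq$ in $\End_k(M)$ equals the image of $\Dist(U_r)$ --- is not just unproved by the ``unitriangular-plus-Vandermonde'' heuristic; it is false, even under the hypotheses of the theorem. The problem is already present for a single root subgroup: $y_{\alpha,i}$ differs from $X_{\alpha,i}$ by the operators $X_{\alpha,i+n(q-1)}$, $n \geq 1$, which lie in $\Dist(U_\alpha)$ but \emph{not} in $\Dist(U_{\alpha,r})$, and nothing forces their action on $M$ to land in the image of $\Dist(U_r)$. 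Concretely, for $G = SL_2$ and $M = \St_1 \otimes L(1)^{[1]}$ (which is $B_1$-projective), each $X_{\alpha,i}$ with $0 \leq i < p$ acts as $X_{\alpha,i} \otimes 1$, so the image of $\Dist(U_1)$ lies in $\End_k(\St_1) \otimes 1$, whereas $x_\alpha(a)$ acts as $x_\alpha(a) \otimes x_\alpha(a)$ with the second factor nontrivial for $a \neq 0$; the two images differ, yet the theorem holds for this $M$. Worse, your argument only ever uses the restriction of the action to the root subgroups $U_\alpha \subset U$, i.e.\ the $U$-module structure, so if it were correct it would prove the analogous statement for rational $U$-modules --- which Example \ref{example:SL2} shows is false: for $M = f^*(\St_r)$ the module is $U_r$-projective but $\Ufq$-trivial, and there the $y_{\alpha,i}$ with $i \geq 1$ act as $0$ because the correction terms cancel the leading term, the exact opposite of ``unitriangular.''

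The missing ingredient is the torus. The paper upgrades $B_r$-projectivity to $B_rT$-projectivity and uses the classification of projective indecomposable $B_rT$-modules to produce a $\Dist(U_r)$-basis $\set{m_1,\ldots,m_s}$ of $M$ consisting of $T$-weight vectors; it then shows that this same set generates $M$ over $k\Ufq$, which suffices by exactly the dimension count you propose. The point is that the discrepancy $y_{\alpha_1,n_1}\cdots y_{\alpha_N,n_N}.m_i - X_{\alpha_1,n_1}\cdots X_{\alpha_N,n_N}.m_i$ lies in strictly lower $T$-weight spaces, so one inducts on the weight poset, starting from a lowest weight where the two monomials literally agree. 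This also dissolves the obstacle you flag about non-commutativity across different roots: no identification of spans of ordered products is needed, only membership of specific vectors in the $k\Ufq$-submodule generated by the $m_i$. To repair your proof, replace ``the images coincide'' by ``the weight-vector basis generates over both algebras'' and run the induction on weights.
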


\begin{proof}
Suppose $M$ is projective as a $B_r$-module. Then $M$ is a projective $B_rT$-module \cite[II.9.4]{Jantzen:2003}, and by the explicit description of the projective indecomposable $B_rT$-modules \cite[II.9.5]{Jantzen:2003}, there exists a $\Dist(U_r)$-basis $\set{m_1,\ldots,m_s}$ for $M$ consisting of weight vectors for $T$. Because $\Ufq$ is a Sylow $p$-subgroup of $\Bfq$, a $k\Bfq$-module is projective if and only if it is projective (hence, free) as a $k\Ufq$-module. Then to prove the theorem it suffices to show that $\set{m_1,\ldots,m_s}$ is also a $k\Ufq$-basis for $M$. Since $\dim_k \Dist(U_r) = \dim_k k \Ufq$, to show that $\set{m_1,\ldots,m_s}$ is a $k\Ufq$-basis for $M$, it suffices to show that the set $\set{m_1,\ldots,m_s}$ generates $M$ as a $k\Ufq$-module. Using the partial order on $X(T)$, and the operators defined in Section \ref{subsection:equivalentspan}, we argue by induction on the weight ordering to show that the $k\Ufq$-span $M'$ of the set $\set{m_1,\ldots,m_s}$ contains all weight vectors in $M$, hence is equal to $M$.

To begin, fix an enumeration $\Phi^- = \set{\alpha_1,\ldots,\alpha_N}$, and let $\lambda \in X(T)$ be a lowest weight of $T$ in $M$. Since $\set{m_1,\ldots,m_s}$ is a $\Dist(U_r)$-basis for $M$, it follows that the $\lambda$-weight space $M_\lambda$ must be spanned by vectors of the form $X_{\alpha_1,q-1} \cdots X_{\alpha_N,q-1}.m_i$. Since $\lambda$ is a lowest weight vector, we have
\[
X_{\alpha_1,q-1} \cdots X_{\alpha_N,q-1}.m_i = y_{\alpha_1,q-1} \cdots y_{\alpha_N,q-1}.m_i.
\]
Then it follows from Lemma \ref{lemma:equivalentspan} that $M_\lambda \subseteq M'$. Now let $\lambda \in X(T)$ be an arbitrary weight of $T$ in $M$, and set $M_{< \lambda} = \bigoplus_{\mu < \lambda} M_\mu$. By induction, $M_{<\lambda} \subseteq M'$. On the other hand, $M_\lambda$ is spanned by certain vectors of the form $X_{\alpha_1,n_1} \cdots X_{\alpha_N,n_N}.m_i$ with $0 \leq n_i < q$. Given such a vector, the difference
\[
X_{\alpha_1,n_1} \cdots X_{\alpha_N,n_N}.m_i - y_{\alpha_1,n_1} \cdots y_{\alpha_N,n_N}.m_i
\]
is an element of $M_{<\lambda}$, so is a vector in $M'$. But $y_{\alpha_1,n_1} \cdots y_{\alpha_N,n_N}.m_i \in M'$ by Lemma \ref{lemma:equivalentspan}, so we conclude that $X_{\alpha_1,n_1} \cdots X_{\alpha_N,n_N}.m_i \in M'$ as well, and hence that $M_\lambda \subseteq M'$. Since $M$ has only finitely many distinct weight spaces, we conclude that each weight space of $M$ is contained in $M'$, and hence that $M = M'$. Thus, the set $\set{m_1,\ldots,m_s}$ generates $M$ as a $k\Ufq$-module.
\end{proof}

\subsection{Proof of the Generalized Parshall Conjecture}

We now recover the main theorem of \cite{Lin:2007}, and hence also the results contained in \cite[\S\S3--4]{Lin:2007}.

\begin{theorem} \label{theorem:GPC}
Let $G$ be a connected reductive algebraic group over the finite field $\Fq$, and let $M$ be a finite-dimensional rational $G$-module. If $M$ is projective as a $G_r$-module, then $M$ is projective as a $k\Gfq$-module.
\end{theorem}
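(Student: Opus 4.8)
The strategy is to reduce the general reductive case to the Borel subgroup case already handled in Theorem \ref{theorem:GPCforB}. The key representation-theoretic input is that projectivity over $G_r$ can be detected, via restriction, on the Borel subgroup $B_r$: if $M$ is a rational $G$-module that is projective over $G_r$, then $M$ is also projective over $B_r$. This is a standard fact (see \cite[II.9 and II.11]{Jantzen:2003}); concretely, a projective $G_rT$-module restricts to a projective $B_rT$-module, and projectivity over $B_r$ is equivalent to projectivity over $B_rT$ since $M$ carries a compatible $T$-action. Dually, projectivity over the finite group $\Gfq$ is detected on a Sylow $p$-subgroup, and one can take $\Ufq$ (or $\Bfq$) as the relevant subgroup: a $k\Gfq$-module is projective if and only if its restriction to $k\Ufq$ is free.

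With these two reductions in hand the proof is immediate. Suppose $M$ is projective as a $G_r$-module. By the first reduction, $M$ is projective as a $B_r$-module. By Theorem \ref{theorem:GPCforB}, $M$ is then projective as a $k\Ufq$-module, hence as a $k\Bfq$-module. Finally, since $\Ufq$ is a Sylow $p$-subgroup of $\Gfq$ and $M$ restricted to $k\Ufq$ is free, standard finite group cohomology (for instance, the fact that relative projectivity with respect to a Sylow $p$-subgroup is automatic, or that $\opH^*(\Gfq,-)$ injects into $\opH^*(\Ufq,-)$ via restriction on a Sylow subgroup) forces $M$ to be projective as a $k\Gfq$-module. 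One should take a moment to note that these detection statements are insensitive to the extension of scalars from $\Fq$ to the algebraically closed field $k$, and that in the reductive (as opposed to semisimple simply-connected) setting one works with $\Dist(G)$ and with $\Gfq$ as given, the simply-connected hypothesis having been used only to obtain the explicit Kostant-form description of $\Dist(G)$ that underlies Theorem \ref{theorem:GPCforB}; if the ambient group in Theorem \ref{theorem:GPCforB} is taken to be a Borel in a simply-connected cover, one passes along the isogeny, which affects neither $\Ufq$ nor the relevant module categories.

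The only genuine subtlety — and the step I expect to require the most care — is the first reduction: the passage from $G_r$-projectivity to $B_r$-projectivity. For Frobenius kernels this is not quite as trivial as "restrict a projective module," because $B_r$ is not a direct factor and one must invoke the theory of projective $G_rT$- and $B_rT$-modules. The cleanest route is: $M$ projective over $G_r$ $\Rightarrow$ $M$ projective over $G_rT$ \cite[II.9.4]{Jantzen:2003} $\Rightarrow$ $M$ has a good filtration-type or $\Dist(G_r)$-freeness property that, restricted along $B_rT \subseteq G_rT$, yields projectivity over $B_rT$ \cite[II.11]{Jantzen:2003} $\Rightarrow$ $M$ projective over $B_r$. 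Once this is granted the theorem follows formally from Theorem \ref{theorem:GPCforB} and the Sylow-detection of projectivity over $\Gfq$.
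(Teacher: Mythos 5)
Your proposal is correct and follows essentially the same route as the paper: reduce to the semisimple simply-connected case, pass from $G_r$-projectivity to $B_r$-projectivity, invoke Theorem \ref{theorem:GPCforB}, and finish by detecting projectivity over $\Gfq$ on the Sylow $p$-subgroup $\Ufq$. The only (immaterial) difference is in the justification of the $G_r \Rightarrow B_r$ step: the paper uses exactness of the induction functor $\ind_{B_r}^{G_r}$ together with the equivalence of projectivity and injectivity for finite group schemes, rather than the $G_rT$/$B_rT$ argument you sketch.
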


\begin{proof}
By \cite[Proposition 1.3]{Lin:2007}, it suffices to assume that $G$ is semisimple and simply-connected. Suppose $M$ is projective as a $G_r$-module. Since $B_r$ is a finite group scheme, the induction functor $\ind_{B_r}^{G_r}(-)$ is exact \cite[I.5.13]{Jantzen:2003}, which implies that $M$ is projective (equivalently, injective) for $B_r$ by \cite[I.3.18 and Remark I.4.12]{Jantzen:2003}. Now $M$ is projective as a $k\Ufq$-module by Theorem \ref{theorem:GPCforB}. Since $\Ufq$ is a Sylow $p$-subgroup of $\Gfq$, this implies that $M$ is projective as a $k\Gfq$-module.
\end{proof}

\subsection{Failure of the Generalized Parshall Conjecture for unipotent subgroups} \label{subsection:exampleU}

The following example shows that Theorem \ref{theorem:GPC} need not hold if $G$ is replaced by the unipotent radical $U$ of a Borel subgroup $B$ of $G$.

\begin{example} \label{example:SL2}
Suppose $G = SL_2$, so that $U \cong \G_a$. Then the polynomial $f(t) = t - t^q$ defines an algebraic group homomorphism $f: U \rightarrow U$ with $\ker(f) = U(\Fq)$. Now take $M = f^*(\St_r)$, that is, the rational $U$-module obtained from the $r$-th Steinberg module $\St_r$ by precomposing the $U$-module structure map $U \rightarrow GL(\St_r)$ with $f$. Then $M$ is trivial as a $\Ufq$-module. Let $V$ denote the underlying vector space of $\St_r$. The $k[U]$-comodule structure maps $\Delta_{\St_r},\Delta_M : V \rightarrow V \otimes k[\mathbb{G}_a] \cong V \otimes k[t]$ for $\St_r$ and $M$ are related as follows: If $v \in V$ and $\Delta_{\St_r}(v) = \sum_{i=0}^\infty v_i \otimes t^i$ with $v_i \in V$ and $v_j = 0$ for all $j \gg 0$, then $\Delta_M(v) = \sum_{i=0}^\infty v_i \otimes f(t^i) = \sum_{i=0}^\infty v_i \otimes (t^i-t^{qi})$. It then follows that $M \cong \St_r$ as a $\Dist(U_r)$-module, and hence that $M$ is projective as a $U_r$-module even though it is trivial for the finite group $\Ufq$. Observe that since $f$ is a non-homogenous polynomial, the action of $U$ on $M = f^*(\St_r)$ cannot lift to a rational action of the Borel subgroup $B$.
\end{example}

Because of Example \ref{example:SL2}, it follows that Proposition 2.1 and Theorem 2.3 in \cite{Lin:2007} are false for $H = U$, and that \cite[Corollary 2.4]{Lin:2007} also does not hold for an arbitrary connected algebraic group defined over $\Fq$. The proof of \cite[Proposition 2.1]{Lin:2007} fails because, in the notation used there, a homomorphism vanishing on $\soc_N Q(L)$ need not be the zero map. Whether or not \cite[Proposition 2.1]{Lin:2007} might hold for $H = G$ or $H = B$ remains an open question. Example \ref{example:SL2} also shows that if $U$ is an arbitrary connected unipotent algebraic group scheme defined over $\Fq$, there may exist rational $U$-module structures on $\ind_1^{U_r}(k)$ that are not projective upon restriction to $\Ufq$. It remains an open question whether for such $U$ there always exists \emph{some} rational $U$-module structure on $\ind_1^{U_r}(k)$ that is projective upon restriction to $\Ufq$; see \cite[Conjecture 2.4]{Lin:2007}.

\section{Projectivity and Weil restriction of restricted Lie algebras} \label{section:Friedlander}

\subsection{Restricted Lie algebras arising from filtrations on the group algebra}

Let $G$ be as defined in Section \ref{section:introduction}, and let $M$ be a finite-dimensional rational $G$-module. In their original approach to proving the $r=1$ version of the Parshall Conjecture, Lin and Nakano obtained the inequality $c_{\Gfp}(M) \leq \frac{1}{2} c_{G_1}(M)$ by first proving that $c_{\Ufp}(M) \leq c_{U_1}(M)$. To obtain the latter inequality, they observed that the group ring $k\Ufp$ is filtered by the powers of its augmentation ideal, and that the associated graded algebra $\gr k\Ufp$ is isomorphic to the restricted enveloping algebra $u(\fu)$ for $\fu = \Lie(U)$. Equivalently, $\gr k\Ufp \cong \Dist(U_1)$. They then deduced the existence of a spectral sequence $E_1^{i,j} = \opH^{i+j}(U_1,M)_{(i)} \Rightarrow \opH^{i+j}(\Ufp,M)$, and from this the inequality $c_{U_1}(M) \leq c_{\Ufp}(M)$ followed.

In \cite{Friedlander:2010}, Friedlander applies techniques involving the Weil restriction functor to extend Lin and Nakano's results to the case $r \geq 1$. In this context, the isomorphism $\gr k\Ufp \cong u(\fu)$ is replaced by $\gr k \Ufq \cong u(\fufq \ofp k)$. Here $\fufq$ is the restricted Lie algebra over $\Fq$ obtained via scalar extension to $\Fq$ from a Chevalley basis for $\fu_\C$ (and $\fu_\C$ is the obvious Lie subalgebra of $\gc$ corresponding to $U$). There exists a similar restricted Lie algebra $\fufp$ with $\fufp \ofp \Fq = \fufq$ and $\fufp \ofp k = \fu$. In the isomorphism $\gr k\Ufq \cong u(\fufq \ofp k)$, the Lie algebra $\fufq$ is considered via Weil restriction as a restricted Lie algebra over $\Fp$ (by forgetting the additional $\Fq$-vector space structure), and then the scalars are extended back to $k$. Replacing $\fu$ by $\g$, one also has the restricted Lie algebras $\gfp$, $\gfq = \gfp \ofp \Fq$, and $\gfq \ofp k$. Since
\begin{equation} \label{eq:splitting}
\Fq \ofp \Fq \cong \Fq \times \cdots \times \Fq \quad \text{($r$ times, $q = p^r$),}
\end{equation}
there exists an isomorphism of restricted Lie algebras $\gfq \ofp k \cong \g^{\oplus r}$.

\subsection{Failure of rational modules to be projective}

Let $M$ be a rational $G$-module. The action of $G$ on $M$ differentiates to an action of $\g$, and then restricts to an action of $\gfq$ considered as a restricted Lie algebra over $\Fp$. This action of $\gfq$ on $M$ can be extended over $\Fp$ to an action of $\gfq \ofp k$ on $M$. Then $\gfq \ofp k$ acts on $M$ via the composition of the multiplication map $\gfq \ofp k \rightarrow \g$ with the given action of $\g$ on $M$. With this convention in hand, Friedlander states the following results:

\begin{theorem}[(cf.\ {\cite[Theorem 4.3]{Friedlander:2010}})]
Let $M$ be a rational $G$-module. Then
\begin{equation} \label{eq:complexitybound}
c_{\Gfq}(M) \leq \frac{1}{2} c_{u(\gfq \ofp k)}(M).
\end{equation}
\end{theorem}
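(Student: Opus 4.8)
The plan is to reduce, as in Lin and Nakano's treatment of the case $r = 1$ \cite{Lin:1999}, to a statement about the unipotent radical, and then to exploit the decomposition $\gfq \ofp k \cong \g^{\oplus r}$. First, since $\Ufq$ is a Sylow $p$-subgroup of $\Gfq$, complexity is detected on it: $c_{\Gfq}(M) = c_{\Ufq}(M)$. Next I would invoke the filtration of $k\Ufq$ by the powers of its augmentation ideal, for which $\gr k\Ufq \cong u(\fufq \ofp k)$; the associated spectral sequence, whose $E_1$-page is a Frobenius-twisted form of $\opH^*(u(\fufq\ofp k),M)$, abuts to $\opH^*(\Ufq,M)$, so the rate of growth of $\opH^*(\Ufq,M)$ is at most that of $\opH^*(u(\fufq\ofp k),M)$, i.e.\ $c_{\Ufq}(M) \le c_{u(\fufq\ofp k)}(M)$. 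It therefore suffices to prove the purely Lie-theoretic inequality $c_{u(\fufq\ofp k)}(M) \le \frac{1}{2}\,c_{u(\gfq\ofp k)}(M)$.

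For that, I would compute both complexities as dimensions of support varieties using the $r$-fold decomposition. Under the isomorphism $\gfq\ofp k \cong \g^{\oplus r}$ (and likewise $\fufq\ofp k \cong \fu^{\oplus r}$), the multiplication map $\gfq\ofp k \to \g$ is, up to reindexing the summands, projection onto the single factor attached to the standard embedding $\Fq \hookrightarrow k$; hence $M$, viewed as a module over $u(\gfq\ofp k) \cong u(\g)^{\otimes r}$, is the original $u(\g)$-module on that factor and trivial on the other $r - 1$ factors. By the K\"unneth theorem and the product formula for support varieties of outer tensor products,
\[
\abs{u(\gfq\ofp k)}_M \cong \abs{u(\g)}_M \times \mathcal{N}_p(\g)^{r-1}, \qquad \abs{u(\fufq\ofp k)}_M \cong \abs{u(\fu)}_M \times \mathcal{N}_p(\fu)^{r-1},
\]
where $\mathcal{N}_p(\g) = \set{x \in \g : x^{[p]} = 0}$ is the restricted nullcone and $\abs{u(\g)}_k = \mathcal{N}_p(\g)$; taking dimensions, $c_{u(\gfq\ofp k)}(M) = c_{u(\g)}(M) + (r-1)\dim\mathcal{N}_p(\g)$, and similarly with $\fu$ in place of $\g$.

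The remaining input is one geometric fact: for any $G$-stable closed conical subvariety $V$ of the nullcone of $\g$, one has $\dim(V \cap \fu) = \frac{1}{2}\dim V$. (This is the dimension count behind Lin and Nakano's bound $c_{U_1}(M) \le \frac{1}{2} c_{G_1}(M)$: writing $V$ as a union of nilpotent orbit closures and using $\dim(\mathcal{O}\cap\fu) = \frac{1}{2}\dim\mathcal{O}$ for each orbit $\mathcal{O}$, obtained from the Steinberg variety / Springer-fiber dimension.) Since $M$ is a $G$-module, $\abs{u(\g)}_M$ is $G$-stable; applying the fact to it, and to $V = \mathcal{N}_p(\g)$, and using $\abs{u(\fu)}_M = \abs{u(\g)}_M \cap \fu$ together with $\mathcal{N}_p(\fu) = \mathcal{N}_p(\g)\cap\fu$, we obtain $c_{u(\fu)}(M) = \frac{1}{2} c_{u(\g)}(M)$ and $\dim\mathcal{N}_p(\fu) = \frac{1}{2}\dim\mathcal{N}_p(\g)$. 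Substituting into the formulas of the previous paragraph gives $c_{u(\fufq\ofp k)}(M) = \frac{1}{2} c_{u(\g)}(M) + \frac{r-1}{2}\dim\mathcal{N}_p(\g) = \frac{1}{2} c_{u(\gfq\ofp k)}(M)$, which together with the first paragraph proves the theorem.

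The main obstacle I anticipate is the bookkeeping in the middle step: identifying precisely which homomorphism $\gfq\ofp k \to \g$ transports the module structure and verifying that, under $\gfq\ofp k \cong \g^{\oplus r}$, it is a coordinate projection (not, say, the sum map), so that $M$ is genuinely trivial on $r - 1$ of the summands and the product formula for support varieties applies cleanly. One must also phrase the equality $\dim(V\cap\fu) = \frac{1}{2}\dim V$ and the identifications $\abs{u(\g)}_k = \mathcal{N}_p(\g)$, $\abs{u(\fu)}_M = \abs{u(\g)}_M\cap\fu$ in the generality and characteristic required; these are standard in the support-variety theory for restricted Lie algebras, but they are the load-bearing hypotheses. (A harmless residual point: identifying the distinguished summand with $\g$ may introduce a Frobenius twist, which changes none of the dimensions and can be noted in passing.)
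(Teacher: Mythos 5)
The paper gives no proof of this statement---it is quoted from \cite[Theorem 4.3]{Friedlander:2010}, with only the strategy sketched in the surrounding discussion---and your argument is a correct reconstruction of exactly that strategy: Sylow reduction to $\Ufq$, the augmentation-ideal filtration with $\gr k\Ufq \cong u(\fufq \ofp k)$ and its spectral sequence, the splitting $\gfq \ofp k \cong \g^{\oplus r}$ with $M$ nontrivial on a single factor, and the half-dimension count $\dim(V \cap \fu) = \tfrac{1}{2}\dim V$ for $G$-stable closed subvarieties of the nilpotent cone. The only caveat is the one you flag yourself: the standard support-variety inputs (the identification $\abs{u(\g)}_k = \mathcal{N}_p(\g)$, the restriction and outer-tensor-product formulas, and the half-dimension statement) must hold in the generality claimed, and any hypotheses on $p$ in Friedlander's original theorem enter precisely there.
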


\begin{corollary}[(cf.\ {\cite[Corollary 4.4]{Friedlander:2010}})]\label{corollary:projimpliesproj}
Let $M$ be a rational $G$-module. If $M$ is projective for $u(\gfq \ofp k)$, then $M$ is projective for $k\Gfq$.
\end{corollary}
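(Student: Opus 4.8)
The plan is to obtain the corollary as an immediate consequence of the inequality \eqref{eq:complexitybound}, using the elementary fact that a projective module over any finite-dimensional algebra has complexity zero, together with the standard converse for finite group algebras.

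First I would record the two numerical facts that are needed. If $M$ is projective for $u(\gfq \ofp k)$, then its minimal projective resolution over $u(\gfq \ofp k)$ terminates, so $c_{u(\gfq \ofp k)}(M) = 0$; equivalently, the cohomological support variety $\abs{u(\gfq \ofp k)}_M$ is trivial. (Here $u(\gfq \ofp k)$ is, via \eqref{eq:splitting}, the restricted enveloping algebra of $\g^{\oplus r}$.) In the other direction, since $k\Gfq$ is a finite-dimensional self-injective algebra, a $k\Gfq$-module has finite projective dimension only if it is already projective; hence $c_{\Gfq}(M) = 0$ forces $M$ to be projective as a $k\Gfq$-module.

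Granting this, the argument is one line: if $M$ is projective for $u(\gfq \ofp k)$ then $c_{u(\gfq \ofp k)}(M) = 0$, so \eqref{eq:complexitybound} together with the non-negativity of complexity gives $c_{\Gfq}(M) = 0$, whence $M$ is projective over $k\Gfq$. Note that the precise value of the constant in \eqref{eq:complexitybound}, here $\frac{1}{2}$, plays no role whatsoever; any estimate of the form $c_{\Gfq}(M) \le C\, c_{u(\gfq \ofp k)}(M)$ with $C \ge 0$ would suffice.

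In this approach there is essentially no obstacle to overcome: all of the substance resides in the complexity estimate \eqref{eq:complexitybound}, which is being taken as given, and the remaining ingredients — that a projective module has zero complexity, and that over a finite-dimensional self-injective algebra the converse holds — are standard. One small point worth flagging is that the hypothesis of Corollary~\ref{corollary:projimpliesproj} refers specifically to the $u(\gfq\ofp k)$-action on $M$ described above (coming from the differentiated $\g$-action composed with the multiplication map $\gfq\ofp k \to \g$), which is formulated differently from the condition ``$M$ projective for $G_r$'' appearing in Theorem~\ref{theorem:GPC}; so Corollary~\ref{corollary:projimpliesproj} is not merely a restatement of that theorem.
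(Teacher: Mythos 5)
Your argument is correct and is exactly the intended derivation: the paper states Corollary~\ref{corollary:projimpliesproj} without proof, quoting it from Friedlander, and it follows from the complexity bound \eqref{eq:complexitybound} precisely as you describe (projectivity gives $c_{u(\gfq \ofp k)}(M)=0$, hence $c_{\Gfq}(M)=0$, and complexity zero over the self-injective algebra $k\Gfq$ forces projectivity). No gaps.
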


Identifying $\gfq \ofp k$ with $\g^{\oplus r}$, the induced action of $\g^{\oplus r}$ on $M$ is obtained by composing the projection $\g^{\oplus r} \rightarrow \g$ of $\g^{\oplus r}$ onto its first factor with the ordinary action of $\g$ on $M$; this follows from the fact that the multiplication map $k^{\times r} \cong \Fq \ofp k \rightarrow k$ is a $k$-algebra homomorphism, and hence identifies with the projection of $k^{\times r}$ onto one of its factors, say, the first. We use this realization for the action of $\g^{\oplus r}$ on $M$ to show for $r \geq 2$ that a rational $G$-module is never projective over $u(\gfq \ofp k)$, and hence that Corollary~\ref{corollary:projimpliesproj} holds vacuously. In particular, this implies that the generalization of the original Lin--Nakano technique to $r > 1$ is not an effective method for determining the projectivity of a rational $G$-module over $\Gfq$.

\begin{theorem}
Let $M$ be a rational $G$-module, and suppose $r \geq 2$. Then $M$ is not projective for $u(\gfq \ofp k)$.
\end{theorem}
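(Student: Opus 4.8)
The plan is to exploit the identification $u(\gfq \ofp k) \cong u(\g)^{\otimes r}$ together with the description, recalled just above, of the resulting $u(\g)^{\otimes r}$-module structure on $M$: since the action of $\g^{\oplus r}$ factors through the projection onto the first summand, in $u(\g)^{\otimes r}$ the second through $r$-th tensor factors act on $M$ through the counit of $u(\g)$, i.e.\ trivially. (It is here that the hypothesis $r \geq 2$ is used: for $r = 1$ no tensor factor acts trivially and the statement fails, as the Steinberg module $\St_r$ shows.) The idea is then to restrict the action to a small, visibly non-semisimple subalgebra sitting inside one of these trivially-acting factors and to derive a contradiction with projectivity. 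We may of course assume $M \neq 0$.

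Concretely, I would fix a root $\alpha \in \Phi$ and the corresponding root vector $e_\alpha \in \Lie(U_\alpha) \subseteq \g$. Because $U_\alpha \cong \G_a$ has one-dimensional restricted Lie algebra with trivial $p$-operation, $e_\alpha^{[p]} = 0$, so $\mathfrak{a} := k e_\alpha$ is a one-dimensional restricted subalgebra of $\g$ with $u(\mathfrak{a}) \cong k[e_\alpha]/(e_\alpha^p)$ a local ring of dimension $p > 1$. Viewing $\mathfrak{a}$ inside the $r$-th tensor factor of $u(\g)^{\otimes r}$, I would invoke the PBW theorem: $u(\g)$ is free as a left $u(\mathfrak{a})$-module, and $u(\g)^{\otimes r}$ is free as a left module over its $r$-th factor, so $u(\g)^{\otimes r}$ is free, in particular projective, as a left $u(\mathfrak{a})$-module. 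Consequently restriction to $u(\mathfrak{a})$ carries projective $u(\g)^{\otimes r}$-modules to projective $u(\mathfrak{a})$-modules.

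To conclude, I would observe that, since the $r$-th tensor factor acts trivially on $M$, the restriction of $M$ to $u(\mathfrak{a})$ is a trivial module — a direct sum of $\dim_k M$ copies of the trivial one-dimensional module. Over the local algebra $u(\mathfrak{a}) = k[e_\alpha]/(e_\alpha^p)$, which is not a field as $p > 1$, the trivial module is not projective, and neither is any nonzero multiple of it; this contradicts the projectivity, just established, of $M|_{u(\mathfrak{a})}$. Hence $M$ is not projective over $u(\g)^{\otimes r} \cong u(\gfq \ofp k)$.

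I do not expect a genuine obstacle here: once the $u(\g)^{\otimes r}$-module structure on $M$ is pinned down, the argument is a routine combination of PBW freeness and the failure of projectivity of the trivial module over a non-semisimple local algebra. The one point requiring care is the bookkeeping that the ``extra'' tensor factors act through the counit — but that is precisely what the discussion preceding the theorem provides, via the identification of the multiplication map $\Fq \ofp k \to k$ with a coordinate projection of $k^{\times r}$.
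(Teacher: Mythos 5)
Your proof is correct, and its core idea coincides with the paper's: both arguments seize on a nonzero $p$-nilpotent element sitting in a factor of $\g^{\oplus r}$ other than the first (you take a root vector $e_\alpha$ placed in the $r$-th factor; the paper takes an arbitrary $x$ with $x^{[p]}=0$ placed in the second), observe that it annihilates $M$ because the action factors through the first-coordinate projection, and note that a nonzero trivial module over $k[t]/(t^p)$ is not projective. Where you genuinely diverge is in how this local failure is promoted to non-projectivity over all of $u(\gfq \ofp k)$: the paper invokes the support-variety machinery of Friedlander and Parshall (the cyclic shifted subalgebra detects that $\abs{\g^{\oplus r}}_M \neq 0$, hence $M$ is not injective, equivalently not projective), whereas you argue directly that $u(\g)^{\otimes r}$ is free over the cyclic subalgebra $u(\mathfrak{a})$ by the restricted PBW theorem, so that restriction carries projectives to projectives. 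Your route is more elementary and self-contained, requiring no cohomological input or citation of the support-variety literature, and it is sound: $u(\g^{\oplus r})$ is indeed free over the restricted subalgebra $0^{\oplus (r-1)} \oplus k e_\alpha$, and a nonzero module annihilated by $e_\alpha$ cannot be free, hence cannot be projective, over the local algebra $k[e_\alpha]/(e_\alpha^p)$. The one hypothesis you rightly make explicit, and the paper leaves implicit, is $M \neq 0$ (the zero module being projective).
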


\begin{proof}
Let $M$ be a rational $G$-module, and identify $\gfq \ofp k$ with $\g^{\oplus r}$. Suppose $r \geq 2$. Denote the $p$-th power map on $\g$, that is, the map defining the structure of a $p$-restricted Lie algebra on $\g$, by $x \mapsto x^{[p]}$. Choose $0 \neq x \in \g$ with $x^{[p]} = 0$, and set $z = (0,x,0,\ldots,0) \in \g^{\oplus r}$. Then $z^{[p]} = 0$. Since $\g^{\oplus r}$ acts on $M$ via the first-factor projection map $\g^{\oplus r} \rightarrow \g$ composed with the given action of $\g$ on $M$, one has $z.M = 0$. Let $u(z) \cong k[t]/(t^p)$ be the cyclic subalgebra of $u(\g^{\oplus r})$ generated by $z$. Then $M$ is trivial as a $u(z)$-module, hence not projective over $u(z)$, since every projective $u(z)$-module is free. This implies by \cite[Corollary 1.4]{Friedlander:1986b} that the support variety $\abs{\g^{\oplus r}}_M$ is nonzero, hence by \cite[Proposition 1.5]{Friedlander:1987} that $M$ is not injective (equivalently, projective) for $u(\g^{\oplus r}) = u(\gfq \ofp k)$.
\end{proof}

\providecommand{\bysame}{\leavevmode\hbox to3em{\hrulefill}\thinspace}

\end{document}